\documentclass[pdflatex,sn-basic]{sn-jnl}

\usepackage{graphicx}%
\usepackage{multirow}%
\usepackage{amsmath,amssymb,amsfonts,mathtools}%
\usepackage{amsthm}%
\usepackage{mathrsfs}%
\usepackage[title]{appendix}%
\usepackage[x11names]{xcolor}%
\usepackage{textcomp}%
\usepackage{manyfoot}%
\usepackage{booktabs}%
\usepackage{algorithm}%
\usepackage{algorithmicx}%
\usepackage{algpseudocode}%
\usepackage{listings}%
\usepackage{enumitem}%

\usepackage[nameinlink]{cleveref}
\hypersetup{colorlinks=true,allcolors=Blue4}

\theoremstyle{thmstyleone}%
\newtheorem{theorem}{Theorem}
\newtheorem{proposition}[theorem]{Proposition}%

\theoremstyle{thmstyletwo}%
\newtheorem{remark}{Remark}%

\theoremstyle{thmstylethree}%

\AddToHook{env/theorem/begin}{\crefalias{theorem}{theorem}}
\AddToHook{env/proposition/begin}{\crefalias{theorem}{proposition}}
\AddToHook{env/example/begin}{\crefalias{example}{example}}
\AddToHook{env/remark/begin}{\crefalias{remark}{remark}}
\AddToHook{env/definition/begin}{\crefalias{definition}{definition}}

\DeclareMathOperator{\id}{id}
\DeclareMathOperator*{\argmin}{arg\,min}
\DeclareMathOperator{\conv}{conv}

\begin{document}

\title{A Convex Feasibility Problem for Which Douglas--Rachford Converges Only Weakly}


\author*[1]{\fnm{Jan} \sur{Quan}}\email{jan.quan@kuleuven.be}

\author[1]{\fnm{Panagiotis} \sur{Patrinos}}\email{panos.patrinos@kuleuven.be}

\affil[1]{\centering \orgdiv{STADIUS}, \orgname{KU Leuven}, \orgaddress{\street{Kasteelpark Arenberg 10}, \city{Leuven}, \postcode{3001}, \country{Belgium}}}


\abstract{
    We provide two closed, convex sets with nonempty intersection and an initial point for which the Douglas--Rachford algorithm converges weakly but not strongly. This resolves an open question as noted earlier by \citet*[Remark~2.2]{aragon2014recent}. Our construction is a simple lift of a recent construction by Bauschke and Tung concerning firmly nonexpansive operators.
}

\keywords{Douglas--Rachford algorithm, convex feasibility, weak convergence, firmly nonexpansive operators}


\pacs[MSC Classification]{47H09, 47J26, 90C25}

\maketitle

\section{Introduction}
In this paper, we consider the feasibility problem
\begin{equation*} \label{eq:problem} \tag{P}
    \textnormal{Find } x \in A \cap B,
\end{equation*}
where $A,B \subseteq \mathcal{H}$ are closed, convex sets with nonempty intersection, and $\mathcal{H}$ is a real Hilbert space with inner product $\langle\cdot\mid\cdot\rangle$ and induced norm $\|\cdot\|$.

The Douglas--Rachford (DR) method is a classical approach for solving \eqref{eq:problem}. It was first developed by \citet{douglas1956numerical} for the solution of linear equations arising from heat conduction problems and later generalized by \citet{lions1979splitting} to finding the zeros of the sum of two maximally monotone operators. The latter formulation applies directly to \eqref{eq:problem}, with the two operators chosen as the normal cones associated with the sets $A$ and $B$. Denote by $P_C : x \mapsto \argmin_{p\in C} \|x - p\|$ the (necessarily single-valued) projector onto a nonempty, closed, convex set $C\subseteq\mathcal{H}$. Further, let $R_C = 2 P_C - \id$ be the reflector on $C$ where $\id$ denotes the identity operator on $\mathcal{H}$. Then, the DR algorithm for solving \eqref{eq:problem} is given by the iteration
\begin{equation*} \label{eq:dr} \tag{DR}
    y_{n+1} = T_{A,B}(y_n), \quad \quad T_{A,B} = \frac12(\id + R_BR_A) = \id - P_A + P_B(2P_A - \id),
\end{equation*}
with corresponding shadow sequences $\{P_A(y_n)\}$ and $\{P_BR_A(y_n)\}$, where $n\in\mathbb{N}\coloneq \{0, 1, 2, \ldots\}$.

We recall some basic results, for which proofs can be found in \citet{lions1979splitting}, \citet{svaiter2011weak}, and \citet{bauschke2017correction}.
\begin{theorem} \label{thm:weak_dr}
    Let $\mathcal{H}$ be a real Hilbert space and $A,B\subseteq\mathcal{H}$ be two closed, convex sets with $A \cap B \neq \emptyset$. Consider the Douglas--Rachford operator $T_{A,B}$ and iterates $\{y_n\}$ defined in \eqref{eq:dr}. Then, there exists a fixed point $y\in\mathcal{H}$ of $T_{A,B}$ such that
    \begin{enumerate}[label=(\roman*)]
        \item $y_n\rightharpoonup y$,
        \item $P_A(y_n) \rightharpoonup P_A(y)$, $P_BR_A(y_n) \rightharpoonup P_A(y)$, and $P_A(y) \in A \cap B$.
    \end{enumerate}
    Here, $\rightharpoonup$ denotes weak convergence, that is, $x_n \rightharpoonup x$ if $\langle x_n\mid z\rangle \to \langle x\mid z\rangle$ for all $z\in\mathcal{H}$.
\end{theorem}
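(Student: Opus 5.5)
We prove (i) by showing that $T_{A,B}$ is firmly nonexpansive with nonempty fixed-point set, and then applying the Opial/Fejér argument. We then get (ii) from the asymptotic regularity of the iterates together with a maximal monotonicity (demiclosedness) argument on a product space, in the style of Svaiter.

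\textbf{Step 1: firm nonexpansiveness.} Each projector $P_C$ is firmly nonexpansive, so the reflectors $R_A,R_B$ are nonexpansive. Hence $R_BR_A$ is nonexpansive, and $T_{A,B}=\frac12(\id+R_BR_A)$ is firmly nonexpansive.

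\textbf{Step 2: fixed points exist.} If $z\in A\cap B$, then $R_Az=z$ and $R_Bz=z$, so $T_{A,B}z=z$. Thus $\operatorname{Fix}T_{A,B}\neq\emptyset$.

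\textbf{Step 3: weak convergence of $y_n$.} Firm nonexpansiveness gives, for every $f\in\operatorname{Fix}T_{A,B}$,
\[
\|y_{n+1}-f\|^2+\|y_n-y_{n+1}\|^2\le\|y_n-f\|^2 .
\]
Consequently $\{y_n\}$ is Fejér monotone and bounded, and $\sum_n\|y_n-T_{A,B}y_n\|^2<\infty$, so $y_n-T_{A,B}y_n\to0$. By demiclosedness of $\id-T_{A,B}$ at $0$, every weak cluster point of $\{y_n\}$ lies in $\operatorname{Fix}T_{A,B}$. Opial's lemma then yields $y_n\rightharpoonup y$ for some $y\in\operatorname{Fix}T_{A,B}$, which is (i).

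\textbf{Step 4: fixed points project into $A\cap B$.} If $y=T_{A,B}y$, then $P_Ay=P_B(2P_Ay-y)$, so $P_Ay\in A\cap B$.

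\textbf{Step 5: setting up (ii).} This is the main obstacle, since $P_A$ is not weakly continuous. Write $a_n=P_Ay_n$ and $b_n=P_BR_Ay_n$. Then
\[
y_n-y_{n+1}=a_n-b_n\to0 .
\]
Both sequences are bounded because $P_A$ and $R_A$ are nonexpansive. From the characterization of projections,
\[
y_n-a_n\in N_A(a_n),\qquad 2a_n-y_n-b_n\in N_B(b_n).
\]
Set $u_n=y_n-a_n$. The second inclusion becomes $a_n-b_n-u_n\in N_B(b_n)$. Take any weak cluster point $(\bar a,\bar u)$ of $(a_n,u_n)$ along a subsequence; along it $b_n\rightharpoonup\bar a$ too, since $a_n-b_n\to0$.

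Consider the operator on $\mathcal H\times\mathcal H$
\[
\mathcal S(a,b)=\bigl(N_A a\times N_B b\bigr)+\bigl(v,\,-v\bigr)\text{-type skew coupling}.
\]
More precisely, use the maximally monotone operator $(a,u)\mapsto(N_A a+u)\times(N_B^{-1}(-u)-a)$, i.e. the graph of $N_A\times N_B^{-1}$ composed with a skew linear map. The inclusions place the pairs $\bigl((a_n,u_n),(a_n-b_n,\,b_n-a_n)\bigr)$, up to vanishing terms, in its graph. The second component of this pair converges strongly to $0$.

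\textbf{Step 6: passing to the limit.} A maximally monotone operator whose graph points converge weakly in the first variable, strongly to $0$ in the second, and satisfy $\limsup\langle x_n\mid v_n\rangle\le\langle\bar x\mid 0\rangle$ has its limit in the graph. The $\limsup$ condition holds here because the skew part cancels the cross terms. Hence $\bar u\in N_A\bar a$ and $-\bar u\in N_B\bar a$.

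\textbf{Step 7: identifying the limit.} Since $y_n=a_n+u_n$, we get $\bar a+\bar u=y$. Therefore $y-\bar a\in N_A\bar a$, which forces $\bar a=P_Ay$. The cluster point is unique, so the whole sequences converge: $a_n\rightharpoonup P_Ay$ and $b_n\rightharpoonup P_Ay$. Combined with Step 4, this gives (ii).

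The delicate part is getting the sign bookkeeping in Steps 5–6 right, so that the weak–strong limit argument for maximally monotone operators applies to $N_A\times N_B^{-1}$ plus the skew operator. If that becomes awkward, we will fall back on Svaiter's direct inequality: prove
\[
\langle a_n-a\mid u_n-u\rangle\ge0\quad\text{for all }(a,u)\in\operatorname{gra}N_A
\]
in the limit, and conclude by maximality of $N_A$.
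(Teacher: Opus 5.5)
Your overall architecture is the standard argument of Svaiter and Bauschke--Combettes that the paper cites; the paper gives no proof of its own. That architecture is Fejér monotonicity, demiclosedness and Opial for $\{y_n\}$, then a maximal-monotonicity argument on $\mathcal H\times\mathcal H$ for the shadows. Steps 1--4 and 7 are fine.

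\textbf{The gap is Step 5.} It carries all of (ii), and as written it fails in three ways.
\begin{enumerate}[label=(\alph*)]
\item The operator $(a,u)\mapsto(N_Aa+u)\times(N_B^{-1}(-u)-a)$ is not monotone. After the skew terms cancel, the pairing is $\langle a-a'\mid p-p'\rangle+\langle u-u'\mid q-q'\rangle$, with $p\in N_Aa$, $p'\in N_Aa'$, $q\in N_B^{-1}(-u)$, $q'\in N_B^{-1}(-u')$. Monotonicity of $N_B$ gives $\langle u-u'\mid q-q'\rangle\le0$, not $\ge 0$. Concretely, take $\mathcal H=\mathbb R$, $A=\mathbb R$, $B=[0,\infty)$, and the points $(0,1)$, $(0,0)$ with images $(1,0)$, $(0,5)$. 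The pairing is $-5$.
\item The graph-membership claim has the wrong sign. At $(a_n,u_n)$ the first slot is $N_Aa_n+u_n\ni 2u_n$. To absorb $u_n\in N_Aa_n$ you must add something close to $-u_n$, not $+u_n$.
\item ``Up to vanishing terms'' cannot be used inside a set-valued operator. Knowing $b_n\in N_B^{-1}(a_n-b_n-u_n)$ with $a_n-b_n\to0$ does not put $b_n$ in $N_B^{-1}(-u_n)$.
\end{enumerate}
Your fallback does not repair this. The inequality $\langle a_n-a\mid u_n-u\rangle\ge0$ holds trivially for every $n$. The whole difficulty is the limit of $\langle a_n\mid u_n\rangle$, a pairing of two merely weakly convergent sequences.

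\textbf{The repair} is to keep the exact $N_B$-normal vector as the second variable. Set $v_n=2a_n-y_n-b_n=a_n-b_n-u_n\in N_Bb_n$, and let $\mathcal S(x,v)=(N_Ax+v)\times(N_B^{-1}v-x)$. This is $N_A\times N_B^{-1}$ plus the bounded skew operator $(x,v)\mapsto(v,-x)$, hence maximally monotone. Then:
\begin{itemize}
\item $\bigl((a_n,v_n),(a_n-b_n,\,b_n-a_n)\bigr)\in\operatorname{gra}\mathcal S$ exactly, and $\{(a_n,v_n)\}$ is bounded.
\item Along your subsequence, $(a_n,v_n)\rightharpoonup(\bar a,-\bar u)$, while the images tend to $0$ strongly.
\item Weak--strong closedness of $\operatorname{gra}\mathcal S$ gives $0\in\mathcal S(\bar a,-\bar u)$, that is, $\bar u\in N_A\bar a$ and $-\bar u\in N_B\bar a$.
\end{itemize}
No $\limsup$ hypothesis is needed in Step 6, because the pairing of a bounded sequence with a strongly null one tends to $0$ automatically. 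The cancellation of cross terms is what makes $\mathcal S$ monotone; it is not what is used at the limit. With this correction, your Step 7 goes through unchanged and the proof is complete.
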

It is a natural question to ask whether the weak convergence in \Cref{thm:weak_dr} can be improved to strong convergence; see for example \citet*[Remark~2.2]{aragon2014recent}. In the more general setting of finding a zero of the sum of two maximally monotone operators, a counterexample to strong convergence was constructed by \citet{bui2020douglas}, building on a counterexample for alternating projections given by \citet{hundal2004alternating}. We provide a counterexample in the setting of \Cref{thm:weak_dr} by an elementary lift of a construction introduced by \citet{bauschke2026ces} concerning firmly nonexpansive operators. Concurrently, \citet*{bui2026douglas} constructed a different counterexample in the setting of \Cref{thm:weak_dr}, based on the cone and associated hyperplane described in \citet{hundal2004alternating}. At first glance, it seems that our construction is simpler.

\begin{remark} \label{rem:bui_feasibility}
    The counterexample in \citet{bui2020douglas} does not apply to the setting considered in \Cref{thm:weak_dr}. Indeed, suppose both operators in \citet[Counterexample~2]{bui2020douglas} were normal cones. Then the resolvent in \citet[Eq.\,(1)]{bui2020douglas} equals the projector $P_C$ onto some set $C\subseteq \mathcal{H}$ and Eq.\,(11) in \citet{bui2020douglas} implies that $y_{n+1} = P_C(y_n)$. Therefore, $P_C(y_{n+1}) =P_C^2(y_n) = P_C(y_n)$, hence $\{P_C(y_n)\}$ is a constant sequence, contradicting its failure to converge strongly in \citet[Counterexample~2]{bui2020douglas}.
\end{remark}

\section{Main result}
\citet{bauschke2026ces} constructed a firmly nonexpansive operator $T$ with $0$ as its unique fixed point, together with an initial point so that the fixed-point iteration converges weakly but not strongly. Moreover, the iterates are constructed to attain equality in the firm nonexpansiveness inequality with respect to $0$. Recall that an operator $T : \mathcal{H} \to \mathcal{H}$ is firmly nonexpansive if $\langle x - y \mid T(x) - T(y)\rangle \geq \|T(x) - T(y)\|^2$ for all $x,y\in\mathcal{H}$ \citep[see][Proposition~4.4]{bauschke2017correction}. Formally, we have the following result.
\begin{proposition} \label{prop:fne_cntex}
    There exists a real Hilbert space $\tilde{\mathcal{H}}$, a firmly nonexpansive operator $T : \tilde{\mathcal{H}} \to \tilde{\mathcal{H}}$ with (unique) fixed point $0$ and a point $z_0 \in \tilde{\mathcal{H}}$ such that, denoting $z_n = T^n(z_0)$ for each $n\in\mathbb{N}$, we have
    \begin{equation} \label{eq:weakstr_fne}
        z_n \rightharpoonup 0, \qquad \|z_n\| \downarrow \rho_\infty \coloneq \inf_{m\in\mathbb{N}}\|z_m\| > 0,
    \end{equation}
    \begin{equation} \label{eq:fne_eq}
        (\forall n \in\mathbb{N})\quad  \langle z_n \mid z_{n+1} \rangle = \langle z_n - 0 \mid T(z_n) - T(0)\rangle = \|T(z_n) - T(0)\|^2 = \|z_{n+1}\|^2.
    \end{equation}
\end{proposition}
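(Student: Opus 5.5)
The plan is to build the orbit $\{z_n\}$ first and only then produce $T$, by extending a firmly nonexpansive map defined on the countable set $S=\{0\}\cup\{z_n : n\in\mathbb{N}\}$. This mirrors the construction of \citet{bauschke2026ces}; if one is content to rely on it, the proposition can simply be read off from their paper. Below I sketch how I would reproduce it.

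\emph{Step 1 (the orbit).} Take $\tilde{\mathcal{H}}=\ell^2$ with orthonormal basis $\{e_k\}$. I would define $z_n$ recursively so that each $z_{n+1}$ is the orthogonal projection of $z_n$ onto a hyperplane through $0$. This gives $\langle z_n - z_{n+1}\mid z_{n+1}\rangle=0$, which is exactly \eqref{eq:fne_eq}, and by Pythagoras
\begin{equation*}
\|z_n\|^2=\|z_{n+1}\|^2+\|z_n-z_{n+1}\|^2 .
\end{equation*}
So $\|z_n\|$ is decreasing. Choosing the steps so that $\sum_n\|z_n-z_{n+1}\|^2<\|z_0\|^2$ gives $\rho_\infty>0$. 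To force $z_n\rightharpoonup 0$, I would make the hyperplanes slowly rotate the mass of $z_n$ onto ever new coordinates. For instance, with $\theta_n\downarrow 0$ and $\sum\theta_n^2<\infty$ but $\sum\theta_n=\infty$, set
\begin{equation*}
z_{n+1}=\cos\theta_n\bigl(\cos\theta_n\, z_n+\sin\theta_n\,\|z_n\|\,e_{k_n}\bigr)
\end{equation*}
with fresh indices $k_n$. Then $\|z_{n+1}\|=\|z_n\|\cos\theta_n$, and $\prod\cos\theta_n>0$. Each fixed coordinate $\langle z_n\mid e_k\rangle$ decays like $\prod_{m\ge n_k}\cos^2\theta_m$. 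Divergence of $\sum\theta_m^2$ would be needed for that product to vanish, which conflicts with $\rho_\infty>0$. So the rotation must instead be organised in blocks, spreading the mass uniformly over longer and longer blocks of coordinates so that each coordinate is eventually small. Tuning this bookkeeping is the first delicate point.

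\emph{Step 2 (firm nonexpansiveness on $S$).} Define $T(0)=0$ and $T(z_n)=z_{n+1}$. I need
\begin{equation*}
\langle z_n-z_m\mid z_{n+1}-z_{m+1}\rangle\ \ge\ \|z_{n+1}-z_{m+1}\|^2\quad\text{for all } n,m,
\end{equation*}
together with the inequalities against $0$, which hold with equality by Step 1. Equivalently, $N=2T-\id$ must be nonexpansive on $S$. I expect this cross-term verification to be the \textbf{main obstacle}: it is what dictates the precise block structure in Step 1. I would choose the construction so that $\langle z_n\mid z_m\rangle$ has an explicit product form, reducing the inequality to an elementary scalar inequality in the $\theta$'s.

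\emph{Step 3 (extension and fixed points).} Apply the Kirszbraun--Valentine theorem to extend $N$ from $S$ to a nonexpansive map on all of $\tilde{\mathcal{H}}$. Then $T=\tfrac12(\id+N)$ is firmly nonexpansive on $\tilde{\mathcal{H}}$, agrees with the orbit, and fixes $0$. Weak convergence $z_n\rightharpoonup 0$ and $\|z_n\|\downarrow\rho_\infty$ come from Step 1. For uniqueness of the fixed point, I would note that $\operatorname{Fix}T$ is closed and convex. I would then replace $T$ by a convex combination $T'=(1-\varepsilon)T+\varepsilon Q$, where $Q$ is firmly nonexpansive, agrees with $T$ on $S$, and has $\operatorname{Fix}Q=\{0\}$. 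If $Q$ is produced by a second Kirszbraun extension, uniqueness is not automatic, so I would try to build it in by hand; this step too may need care.
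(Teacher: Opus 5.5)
Your fallback remark matches the paper exactly: the paper gives no argument of its own and simply cites \citet[Theorem~7.1 and Proposition~6.2]{bauschke2026ces}. Your self-contained reconstruction, however, has a genuine gap, and the Step~1 mechanism is in fact ruled out by Step~2. By \eqref{eq:fne_eq}, firm nonexpansiveness of $z_n\mapsto z_{n+1}$ on $S$ is equivalent to
\[
\langle z_n\mid z_{m+1}\rangle+\langle z_m\mid z_{n+1}\rangle\le 2\langle z_{n+1}\mid z_{m+1}\rangle\qquad(n,m\in\mathbb{N}).
\]
Your explicit recursion has Gram matrix $\langle z_n\mid z_m\rangle=\|z_{\max\{n,m\}}\|^2$. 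So for $m>n$ the inequality reads $\|z_m\|^2\le\|z_{m+1}\|^2$, which is false because the norms strictly decrease. They must do so for any admissible orbit, since $\|z_m\|^2-\|z_{m+1}\|^2=\|z_m-z_{m+1}\|^2$.

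To see the failure concretely, set $d_n=z_n-z_{n+1}$. The pair $(z_n,z_{n+1})$ requires $\langle d_n\mid d_{n+1}\rangle\ge\|d_{n+1}\|^2$. But $d_n\perp z_{n+1}$ by \eqref{eq:fne_eq}, $d_n\perp e_{k_{n+1}}$, and $d_{n+1}\in\mathrm{span}\{z_{n+1},e_{k_{n+1}}\}$, so the left side is $0$. The real difficulty is therefore not the weak-convergence bookkeeping you flag. Consecutive steps must satisfy $\|d_{n+1}\|^2\le\langle d_n\mid d_{n+1}\rangle$, which excludes any nonzero step built from $z_{n+1}$ and directions orthogonal to $d_n$, in blocks or not. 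The orbit has to turn slowly while every pairwise constraint above holds. Exhibiting a Gram matrix that satisfies them together with $\langle z_n\mid z_m\rangle\to0$ as $m\to\infty$ and $\|z_n\|\downarrow\rho_\infty>0$ is exactly the substance of the cited theorem, and your sketch does not supply it.

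In Step~3, the Kirszbraun--Valentine extension is fine, but your uniqueness device is circular. A firmly nonexpansive $Q$ that agrees with $T$ on $S$ and has $\operatorname{Fix}Q=\{0\}$ is precisely the object you are trying to build. A direct fix is available. Let $p$ be a fixed point of any firmly nonexpansive extension. Firm nonexpansiveness between $z_n$ and $p$, together with \eqref{eq:fne_eq}, gives $\langle z_n\mid p\rangle\le\langle z_{n+1}\mid p\rangle$. Since also $\langle z_n\mid p\rangle\to0$, $p$ lies in the set $K=\{x:\langle z_n\mid x\rangle\le0\ \text{for all } n\}$. Now prescribe $N=-\id$ on $K$ before extending. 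This keeps $N$ nonexpansive on $S\cup K$, because $\|2z_{n+1}-z_n+p\|^2-\|z_n-p\|^2=4\langle z_{n+1}\mid p\rangle\le0$ for $p\in K$. Then $T=0$ on $K$, which forces $\operatorname{Fix}T=\{0\}$.
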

\begin{proof}
    See \citet[Theorem~7.1 and Proposition~6.2]{bauschke2026ces}.
\end{proof}

We now provide a natural lift of this counterexample to the product space, which exhibits a counterexample to the strong convergence of \eqref{eq:dr} in the setting of \Cref{thm:weak_dr}. It is natural in the sense that, after fixing $A$ to a trivial choice, \eqref{eq:dr} determines certain points that must belong to $B$. We then take the smallest set containing these points and satisfying the desired properties.

\begin{theorem} \label{thm:dr_countex}
    Let the Hilbert space $\tilde{\mathcal{H}}$, firmly nonexpansive operator $T : \tilde{\mathcal{H}} \to \tilde{\mathcal{H}}$, initial point $z_0 \in \tilde{\mathcal{H}}$ and $\rho_\infty = \inf_{m\in\mathbb{N}} \|z_m\| > 0$ be furnished by \Cref{prop:fne_cntex}, where we denote $z_n = T^n(z_0)$ for $n\in\mathbb{N}$. Define
    \[
        \mathcal{H} = \tilde{\mathcal{H}} \times \tilde{\mathcal{H}}, \qquad A = \tilde{\mathcal{H}} \times \{0\}, \qquad B = \overline{\conv}(\{0\} \cup \{(z_{n+1}, z_{n+1} - z_n) : n \in \mathbb{N}\}),
    \]
    where $\overline{\conv}(C)$ denotes the closed convex hull of $C\subseteq\mathcal{H}$ \citep[see][Definition~3.3]{bauschke2017correction}.
    
    Choose $y_0 = (z_0, z_0)$ and consider the sequence $\{y_n\}$ generated by \eqref{eq:dr} with associated shadow sequences $\{P_A(y_n)\}$ and $\{P_BR_A(y_n)\}$. Then,
    \begin{enumerate}[label=(\roman*)]
        \item \label{item:dr_sets} $A$ and $B$ are closed and convex sets with nonempty intersection,
        \item \label{item:dr_iterates} $y_n = (z_n,z_n)$, $P_A(y_n) = (z_n, 0)$ and $P_BR_A(y_n) = (z_{n+1},z_{n+1}-z_n)$ for each $n\in\mathbb{N}$,
        \item \label{item:dr_convergence} $y_n \rightharpoonup 0$, $P_A(y_n) \rightharpoonup 0$, $P_BR_A(y_n) \rightharpoonup 0$, $\|y_n\| \downarrow \sqrt{2}\rho_\infty > 0$ and $\|P_A(y_n)\| = \|P_BR_A(y_n)\| \downarrow \rho_\infty > 0$.
    \end{enumerate}
\end{theorem}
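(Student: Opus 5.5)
The argument rests on one computation: the projection of $R_A(y_n)$ onto $B$. Part~\ref{item:dr_sets} is immediate. $A$ is a closed subspace, $B$ is a closed convex hull, hence closed and convex, and $0\in A\cap B$.

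For part~\ref{item:dr_iterates} we argue by induction on $n$, assuming $y_n=(z_n,z_n)$. Since $A=\tilde{\mathcal H}\times\{0\}$, we get $P_A(y_n)=(z_n,0)$ and $R_A(y_n)=(z_n,-z_n)$. The main step is to show that
\[
P_B(z_n,-z_n)=p_n\coloneq(z_{n+1},z_{n+1}-z_n).
\]
Because $p_n\in B$, the projection characterization says it suffices to check
\[
\langle (z_n,-z_n)-p_n \mid b-p_n\rangle\le 0 \qquad\text{for all } b\in B.
\]
This inequality is linear and continuous in $b$, so it is enough to verify it on the generators $b=0$ and $b=p_m$, $m\in\mathbb N$; closedness and convexity then extend it to all of $B$.

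Write $d_n=(z_n,-z_n)-p_n=(z_n-z_{n+1},\,-z_{n+1})$. A direct computation gives
\[
\langle d_n\mid p_m\rangle=\langle z_n-z_{n+1}\mid z_{m+1}\rangle-\langle z_{n+1}\mid z_{m+1}-z_m\rangle=\langle z_n\mid z_{m+1}\rangle-\langle z_{n+1}\mid z_m\rangle .
\]
Next use firm nonexpansiveness of $T$ applied to the pair $(z_n,z_m)$. This should give $\langle z_n\mid z_{m+1}\rangle+\langle z_m\mid z_{n+1}\rangle\ge \dots$, which alone may not suffice. It is cleaner to note that, since $T$ is firmly nonexpansive, $T=\tfrac12(\id+N)$ with $N$ nonexpansive.

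Instead, specialize at $m=n$. There $\langle d_n\mid p_n\rangle=0$ by direct computation, and $b=0$ gives $-\langle d_n\mid p_n\rangle=0$. So what we need is $\langle d_n\mid p_m\rangle\le 0$, that is,
\[
\langle z_n\mid z_{m+1}\rangle\le\langle z_{n+1}\mid z_m\rangle\qquad\text{for all } m,n.
\]

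This is the main obstacle. I would derive it from firm nonexpansiveness between $z_n$ and $z_m$:
\[
\langle z_n-z_m\mid z_{n+1}-z_{m+1}\rangle\ge\|z_{n+1}-z_{m+1}\|^2 .
\]
Combined with the equality case \eqref{eq:fne_eq} at both indices, this should yield symmetric information. If that symmetry fails, I would fall back on the specific structure of the Bauschke--Tung iterates, for instance that $T$ is linear or self-adjoint on the span of the iterates. In that case $\langle z_n\mid Tz_m\rangle=\langle Tz_n\mid z_m\rangle$ holds with equality, and the inequality is immediate. Once $P_B R_A(y_n)=p_n$ is established, the DR update is
\[
y_{n+1}=y_n-P_A(y_n)+P_BR_A(y_n)=(z_n,z_n)-(z_n,0)+(z_{n+1},z_{n+1}-z_n)=(z_{n+1},z_{n+1}),
\]
which closes the induction.

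Part~\ref{item:dr_convergence} then follows from \eqref{eq:weakstr_fne}. Componentwise weak convergence $z_n\rightharpoonup0$ gives $y_n\rightharpoonup 0$, $P_A(y_n)\rightharpoonup0$ and $P_BR_A(y_n)\rightharpoonup0$. For the norms, $\|y_n\|=\sqrt2\|z_n\|\downarrow\sqrt2\rho_\infty$ and $\|P_A(y_n)\|=\|z_n\|$. Finally, by \eqref{eq:fne_eq},
\[
\|p_n\|^2=\|z_{n+1}\|^2+\|z_{n+1}\|^2-2\langle z_{n+1}\mid z_n\rangle+\|z_n\|^2=\|z_n\|^2 ,
\]
so $\|P_BR_A(y_n)\|=\|P_A(y_n)\|=\|z_n\|\downarrow\rho_\infty>0$.
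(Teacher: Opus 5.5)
There is a genuine gap. It comes from an algebra slip in the one computation that carries the proof. Your first expression $\langle z_n-z_{n+1}\mid z_{m+1}\rangle-\langle z_{n+1}\mid z_{m+1}-z_m\rangle$ is right, but it simplifies to
\[
\langle d_n\mid p_m\rangle=\langle z_n\mid z_{m+1}\rangle+\langle z_{n+1}\mid z_m\rangle-2\langle z_{n+1}\mid z_{m+1}\rangle,
\]
not to $\langle z_n\mid z_{m+1}\rangle-\langle z_{n+1}\mid z_m\rangle$. You flipped the sign of $\langle z_{n+1}\mid z_m\rangle$ and dropped both copies of $-\langle z_{n+1}\mid z_{m+1}\rangle$. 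So the inequality you single out as the main obstacle, $\langle z_n\mid z_{m+1}\rangle\le\langle z_{n+1}\mid z_m\rangle$ for all $m,n$, is not what has to be proved. It is in fact false for the iterates of \Cref{prop:fne_cntex}:
\begin{enumerate}[label=(\alph*)]
\item Exchanging $m$ and $n$ forces equality, and at $m=n+1$ this reads $\langle z_n\mid z_{n+2}\rangle=\|z_{n+1}\|^2$.
\item Substituting this and \eqref{eq:fne_eq} into firm nonexpansiveness at the pair $(z_n,z_{n+1})$ gives $-\|z_{n+1}-z_{n+2}\|^2\ge\|z_{n+1}-z_{n+2}\|^2$.
\item Hence $z_1=T(z_1)$, so $z_1=0$ by uniqueness of the fixed point, contradicting $\rho_\infty>0$.
\end{enumerate}
The same computation shows that the symmetry $\langle z_n\mid T(z_m)\rangle=\langle T(z_n)\mid z_m\rangle$, on which your fallback relies, can never hold here. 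As written, the case $b=p_m$ of the variational inequality therefore remains unproved.

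The repair is immediate and is exactly the paper's argument. The correct expression is symmetric in $m$ and $n$. Take the firm nonexpansiveness inequality you already wrote down,
\[
\langle z_n-z_m\mid z_{n+1}-z_{m+1}\rangle\ge\|z_{n+1}-z_{m+1}\|^2,
\]
expand it, and use \eqref{eq:fne_eq} at both indices $n$ and $m$ to cancel $\|z_{n+1}\|^2$ and $\|z_{m+1}\|^2$. It becomes precisely $\langle z_n\mid z_{m+1}\rangle+\langle z_m\mid z_{n+1}\rangle\le2\langle z_{n+1}\mid z_{m+1}\rangle$, that is, $\langle d_n\mid p_m\rangle\le0$. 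Equivalently, since $\langle d_n\mid p_n\rangle=0$ by \eqref{eq:fne_eq} (no longer trivially, once the formula is corrected), one has $\langle d_n\mid p_m-p_n\rangle=\|z_{n+1}-z_{m+1}\|^2-\langle z_n-z_m\mid z_{n+1}-z_{m+1}\rangle\le0$. No structure of $T$ beyond firm nonexpansiveness and \eqref{eq:fne_eq} is needed. Everything else in your proposal is correct and follows the paper: part~(i), the reduction to the generators $0$ and $p_m$, the case $b=0$, the DR update closing the induction, and part~(iii) including $\|p_n\|=\|z_n\|$.
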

\begin{proof}
    \ref{item:dr_sets} That $A,B$ are closed, convex and $0\in A\cap B$ is clear by construction.

    \ref{item:dr_iterates} We show that $y_n = (z_n,z_n)$ for each $n\in\mathbb{N}$ by induction. The base case $y_0 = (z_0, z_0)$ is due to the hypothesis. For the induction step, assume that $y_n = (z_n, z_n)$. By \eqref{eq:dr}, we have $y_{n+1} = y_n - P_A(y_n) + P_BR_A(y_n)$. That $P_A(y_n) = (z_n, 0)$ and $R_A(y_n) = (z_n, -z_n)$ follows immediately from the definition of $A$. Thus, the desired result follows if $P_B((z_n, -z_n)) = (z_{n+1}, z_{n+1} - z_n)$. To this end, note that $(z_{n+1}, z_{n+1} - z_n) \in B$ by construction, so in light of \citet[Theorem~3.16]{bauschke2017correction}, it remains to show that
    \begin{equation} \label{eq:proj_var}
        (\forall b \in B) \quad \langle (z_n, -z_n) - (z_{n+1}, z_{n+1} - z_n) \mid b - (z_{n+1}, z_{n+1} - z_n)\rangle \leq 0.
    \end{equation}
    By construction of $B$, we only need to check the preceding inequality for the points $b \in \{0, (z_1, z_1 - z_0), (z_{2}, z_2 - z_1), \ldots\}$. For $b=0$, we must show
    \[
      \langle (z_n - z_{n+1}, - z_{n+1}) \mid (-z_{n+1}, z_n - z_{n+1})\rangle \leq 0,
    \]
    which in fact holds with equality due to \eqref{eq:fne_eq}. Now let $m \in \mathbb{N}$ and $b = (z_{m+1}, z_{m+1} - z_m)$, so we must show that
    \[
         \textnormal{l.h.s.} \coloneq  \langle (z_n - z_{n+1}, -z_{n+1}) \mid (z_{m+1}, z_{m+1} - z_m) - (z_{n+1}, z_{n+1} - z_n)\rangle \leq 0.
    \]
    Expanding the left-hand side and using \eqref{eq:fne_eq} yields
    \begin{align*}
        \textnormal{l.h.s.} &= \langle z_n \mid z_{m+1} \rangle + \langle z_{n+1}\mid z_{m}\rangle - 2\langle z_{n+1}\mid z_{m+1}\rangle \\
        &= \|z_{n+1} - z_{m+1}\|^2 - \langle z_n - z_m\mid z_{n+1} - z_{m+1}\rangle.
    \end{align*}
    Finally, since $T$ is firmly nonexpansive by \Cref{prop:fne_cntex}, we have
    \[
        \langle z_n - z_m\mid z_{n+1} - z_{m+1}\rangle = \langle z_n - z_m\mid T(z_n) - T(z_m)\rangle \geq \|T(z_n) - T(z_m)\|^2 = \|z_{n+1} - z_{m+1}\|^2,
    \]
    which implies l.h.s.\ $\leq 0$, as desired. Thus, $y_n = (z_n, z_n)$ for each $n\in\mathbb{N}$.

    The statements $P_A(y_n) = (z_n, 0)$ and $P_BR_A(y_n) = (z_{n+1}, z_{n+1} - z_n)$ for each $n\in\mathbb{N}$ were already established above.
    
    \ref{item:dr_convergence} The weak-convergence assertions follow from \ref{item:dr_iterates} and $z_n\rightharpoonup 0$ in \eqref{eq:weakstr_fne}. The statements $\|y_n\| \downarrow \sqrt{2}\rho_\infty > 0$ and $\|P_A(y_n)\| \downarrow \rho_\infty > 0$ follow immediately from \ref{item:dr_iterates} and \eqref{eq:weakstr_fne}. Finally, by \ref{item:dr_iterates} and \eqref{eq:fne_eq}, we have
    \begin{align*}
        \|P_BR_A(y_n)\|^2 &= \|z_{n+1}\|^2+\|z_{n+1}-z_n\|^2 \\
        &= 2\|z_{n+1}\|^2 - 2\langle z_{n+1}\mid z_n\rangle + \|z_n\|^2 \\
        &= \|z_n\|^2\downarrow\rho_\infty^2>0.
    \end{align*}
\end{proof}

\begin{remark}
    In fact, $A,B$ in \Cref{thm:dr_countex} satisfy $A\cap B = \{0\}$. This follows from an argument similar to that in \citet[Lemma~6.4]{bauschke2026ces}. Concretely, due to the definition of $A$, points in $A\cap B$ are necessarily of the form $(y, 0)$ for some $y\in\tilde{\mathcal{H}}$, so it suffices to show that $(y, 0)\in B \implies y = 0$. Therefore, let $(y,0)\in B$ with $y\in\tilde{\mathcal{H}}$. It follows from \eqref{eq:proj_var} that $\langle (z_n - z_{n+1}, -z_{n+1})\mid (y - z_{n+1}, z_n - z_{n+1})\rangle \leq 0$ for each $n\in\mathbb{N}$, which is equivalent to $\langle z_n\mid y\rangle \leq \langle z_{n+1}\mid y\rangle$ for each $n\in\mathbb{N}$ in light of \eqref{eq:fne_eq}. By \eqref{eq:weakstr_fne}, we have $z_n\rightharpoonup0$, hence $\langle z_n\mid y\rangle\leq 0$ for each $n\in\mathbb{N}$. Since $y \in \overline{\conv}(\{0\} \cup \{z_{n+1} : n \in \mathbb{N}\})$, it follows that $\|y\|^2 \leq 0$, ensuring $y=0$ and $A\cap B=\{0\}$.
\end{remark}

\begin{remark} \label{rem:cesaro}
    Moreover, $T$ and $z_0$ in \Cref{thm:dr_countex} can even be chosen so that the Ces\`aro means of $\{y_n\}$ and both shadow sequences converge weakly but not strongly. Indeed, Example~7.4 in \citet{bauschke2026ces} provides a sequence $\{z_n\}$ as in \Cref{prop:fne_cntex} for which, denoting $\bar z_N = \frac1N\sum_{n=0}^{N-1}z_n$ for each $N\geq1$, we have $\inf_{N\geq1}\|\bar z_N\|>0$. Averaging the identities in \Cref{thm:dr_countex}\ref{item:dr_iterates} yields
    \[
        \bar y_N \coloneq \frac1N\sum_{n=0}^{N-1}y_n=(\bar z_N,\bar z_N), \qquad
        \frac1N\sum_{n=0}^{N-1}P_A(y_n)=(\bar z_N,0).
    \]
    By \eqref{eq:weakstr_fne}, we have $z_n\rightharpoonup0$, hence $\bar z_N\rightharpoonup0$. Thus, both sequences of Ces\`aro means converge weakly to $0$, while $\sqrt{2}\|\bar z_N\|$ and $\|\bar z_N\|$ are bounded away from $0$.
    Further, \eqref{eq:dr} yields
    \[
        \frac1N\sum_{n=0}^{N-1}P_BR_A(y_n)=\frac1N\sum_{n=0}^{N-1}P_A(y_n)+\frac{y_N-y_0}{N},
    \]
    and $(y_N-y_0)/N\to0$ since $\{y_n\}$ is bounded. Thus, the Ces\`aro means of $\{P_BR_A(y_n)\}$ also converge weakly but not strongly, as desired.
\end{remark}

\backmatter

\begingroup
\small
\section*{Declarations}

\noindent\textbf{Funding.} This work was supported by the Research Foundation Flanders (FWO) PhD grant 11A8T26N; Research Council KUL grant C14/24/103.
\par\smallskip
\noindent\textbf{Competing interests.} There are no financial or non-financial interests that are directly or indirectly related to this work.
\par\smallskip
\noindent\textbf{Data availability.} We do not analyse or generate any datasets, because our work proceeds within a theoretical and mathematical approach.
\par \smallskip
\noindent\textbf{Acknowledgements}
The authors acknowledge the assistance of ChatGPT 5.6 Sol for exploration of ideas. The main idea of lifting the construction of Bauschke and Tung is due to the authors. The authors thank Konstantinos Oikonomidis for helpful conversations.
\endgroup

\bibliography{references}

@article{aragon2014recent,
  title={Recent results on {D}ouglas--{R}achford methods for combinatorial optimization problems},
  author={Arag{\'o}n Artacho, Francisco J and Borwein, Jonathan M and Tam, Matthew K},
  journal={Journal of Optimization Theory and Applications},
  volume={163},
  number={1},
  pages={1--30},
  year={2014},
  doi={10.1007/s10957-013-0488-0},
  publisher={Springer}
}

@article{douglas1956numerical,
  title={On the numerical solution of heat conduction problems in two and three space variables},
  author={Douglas, Jim and Rachford, Henry H},
  journal={Transactions of the American Mathematical Society},
  volume={82},
  number={2},
  pages={421--439},
  year={1956},
  doi={10.1090/S0002-9947-1956-0084194-4},
  publisher={JSTOR}
}

@article{lions1979splitting,
  title={Splitting algorithms for the sum of two nonlinear operators},
  author={Lions, Pierre-Louis and Mercier, Bertrand},
  journal={SIAM Journal on Numerical Analysis},
  volume={16},
  number={6},
  pages={964--979},
  year={1979},
  doi={10.1137/0716071},
  publisher={SIAM}
}

@article{svaiter2011weak,
  title={On weak convergence of the {D}ouglas--{R}achford method},
  author={Svaiter, Benar Fux},
  journal={SIAM Journal on Control and Optimization},
  volume={49},
  number={1},
  pages={280--287},
  year={2011},
  doi={10.1137/100788100},
  publisher={SIAM}
}

@book{bauschke2017correction,
  title={Convex Analysis and Monotone Operator Theory in {H}ilbert Spaces},
  author={Bauschke, Heinz H and Combettes, Patrick L},
  series={CMS Books in Mathematics},
  publisher={Springer International Publishing},
  address={Cham},
  edition={second},
  year={2017},
  doi={10.1007/978-3-319-48311-5}
}

@article{bui2020douglas,
  title={The {D}ouglas--{R}achford Algorithm Converges Only Weakly},
  author={B{\`u}i, Minh N and Combettes, Patrick L},
  journal={SIAM Journal on Control and Optimization},
  volume={58},
  number={2},
  pages={1118--1120},
  year={2020},
  doi={10.1137/19m1308451},
  publisher={SIAM}
}

@article{hundal2004alternating,
  title={An alternating projection that does not converge in norm},
  author={Hundal, Hein S},
  journal={Nonlinear Analysis: Theory, Methods \& Applications},
  volume={57},
  number={1},
  pages={35--61},
  year={2004},
  doi={10.1016/j.na.2003.11.004},
  publisher={Elsevier}
}

@misc{bauschke2026ces,
  title={Ces\`aro means of firmly nonexpansive iterates need not converge strongly},
  author={Bauschke, Heinz H and Tung, Tran Thanh},
  year={2026},
  doi={10.48550/arXiv.2605.25491},
  eprint={2605.25491},
  archivePrefix={arXiv}
}

@misc{bui2026douglas,
  title={The {D}ouglas--{R}achford algorithm for normal cones converges only weakly},
  author={B{\`u}i, Minh N. and Luo, Honglin and Wang, Xianfu},
  year={2026},
  doi={10.48550/arXiv.2610.26147},
  eprint={2610.26147},
  archivePrefix={arXiv}
}

\end{document}